\begin{document}

\newcommand{\Pf}{{\em Proof}. }
\newcommand{\EPf}{\hfill$\square$}

\newcommand{\Lg}{\mbox{$\mathfrak g$}}
\newcommand{\Ll}{\mbox{$\mathfrak l$}}
\newcommand{\Lh}{\mbox{$\mathfrak h$}}
\newcommand{\Lk}{\mbox{$\mathfrak k$}}
\newcommand{\Lp}{\mbox{$\mathfrak p$}}
\newcommand{\La}{\mbox{$\mathfrak a$}}
\newcommand{\Lb}{\mbox{$\mathfrak b$}}
\newcommand{\Lm}{\mbox{$\mathfrak m$}}
\newcommand{\Ln}{\mbox{$\mathfrak n$}}
\newcommand{\Lc}{\mbox{$\mathfrak c$}}
\newcommand{\Lt}{\mbox{$\mathfrak t$}}
\newcommand{\Lq}{\mbox{$\mathfrak q$}}
\newcommand{\Ls}{\mbox{$\mathfrak s$}}
\newcommand{\Lz}{\mbox{$\mathfrak z$}}

\newcommand\liegr{\sf}

\newcommand{\SU}[1]{\mbox{${\liegr SU}(#1)$}}
\newcommand{\SUxU}[2]{\mbox{${\liegr S(U}(#1)\times{\liegr U}(#2))$}}
\newcommand{\U}[1]{\mbox{${\liegr U}(#1)$}}
\newcommand{\SP}[1]{\mbox{${\liegr Sp}(#1)$}}
\newcommand{\SO}[1]{\mbox{${\liegr SO}(#1)$}}
\newcommand{\SOxO}[2]{\mbox{${\liegr S(O}(#1)\times{\liegr O}(#2))$}}
\newcommand{\OG}[1]{\mbox{${\liegr O}(#1)$}}
\newcommand{\Spin}[1]{\mbox{${\liegr Spin}(#1)$}}
\newcommand{\G}{\mbox{${\liegr G}_2$}}
\newcommand{\F}{\mbox{${\liegr F}_4$}}

\newcommand\fieldsetc{\mathbb}

\newcommand{\Z}{\fieldsetc{Z}}
\newcommand{\R}{\fieldsetc{R}}
\newcommand{\C}{\fieldsetc{C}}
\newcommand{\Q}{\fieldsetc{H}}
\newcommand{\Ca}{\fieldsetc{O}}
\newcommand{\K}{\fieldsetc{K}}

\newtheorem*{thm}{Theorem}
\newtheorem{cor}{Corollary}[section]
\newtheorem{prop}{Proposition}[section]
\newtheorem{lem}{Lemma}[section]

\renewcommand{\thefootnote}{$\blacksquare$\arabic{footnote}$\blacksquare$}
\newcommand{\ft}{\footnote}

\pagenumbering{arabic}
\pagestyle{plain}

\title{The curvature of orbit spaces}

\author[C.~Gorodski]{Claudio Gorodski}
\author[A.~Lytchak]{Alexander Lytchak}

\address{Instituto de Matem\'atica e Estat\'\i stica, Universidade de
S\~ao Paulo, Rua do Mat\~ao, 1010, S\~ao Paulo, SP 05508-090, Brazil}

\email{gorodski@ime.usp.br}

\address{Mathematisches Institut, Universit\"at zu K\"oln,
Weyertal 86-90, 50931 K\"oln, Germany}

\email{alytchak@math.uni-koeln.de}

\thanks{The first author has been partially supported 
by the CNPq grant 303038/2013-6 and the FAPESP project 2011/21362-2.}

\date{\today}

\begin{abstract}
We investigate orbit spaces of isometric actions on unit spheres
and find a universal upper bound for the infimum of their  
curvatures.  
\end{abstract}

\maketitle

\section{Introduction}

Let $G$ be a compact Lie group acting by isometries on the unit sphere $S^n$.
The space of orbits  $X=S^n/G$  is an Alexandrov space of curvature at
least $1$ and diameter at most $\pi$ with respect to the natural quotient metric. The following question of K.~Grove has been investigated    in  \cite{McG,Gr, GS,Searlediameter} and remains widely open in general:
\begin{quote}
\it
How small can the diameter of the orbit space~$X$ be?
\end{quote}

The group $G$ can act transitively on $S^n$,
in which case $X$ is a point and has diameter zero. Also, the quotient
spaces~$S^1/G$ for  large cyclic groups $G$ have arbitrary small nonzero
diameters.  On the other hand, in has been shown in~\cite{Gr} that
for any fixed $n\geq 2$ there is a gap theorem:
for some positive $\epsilon(n)$, the diameter of any  $X=S^n/G$ is at
least $\epsilon(n)$ if~$X$ is not a point.
It seems plausible that a universal, dimension-independent bound
$\epsilon $ should exist, and the analysis of special classes of actions
suggests that such an $\epsilon$ might be rather large~\cite{McG,Gr}.
The case $X$ is $1$-dimensional is well 
understood~\cite[Theorem~B]{Searlediameter}.  In this paper, we consider
the case $\dim(X)\geq2$ and provide an answer to the following closely 
related question:
\begin{quote}
\it
How curved can the  orbit space~$X$ be?
\end{quote}

Denote by  $\kappa _X$ the largest number $\kappa$ such that the
orbit space $X$ is an Alexandrov space of curvature $\geq \kappa$.  
Note that  $\kappa _X$ equals the infimum of the Riemannian
sectional curvatures in the regular part of $X$, see 
Subsection~\ref{firstsubsec}.
Due to the theorem of Bonnet-Myers, $\kappa_X$ provides  an \textit{upper}
bound for the diameter, namely $\mathrm{diam}(X)\leq \pi/\sqrt{\kappa_X}$.
Therefore the existence of a uniform upper bound for $\kappa_X$  is 
 necessary   for the existence of a
uniform lower bound for the diameter. 
 The main result of this paper confirms this necessary condition and shows 
that~$\kappa_X\leq4$:

\begin{thm}\label{main}
Let $X=S^n/G$ be the orbit space of an isometric action of a 
compact Lie group $G$ on the unit sphere $S^n$ and assume 
that $\dim(X)\geq2$. 
If $X$ is an Alexandrov space of  curvature $\geq \kappa$, then
$\kappa\leq 4$.
\end{thm}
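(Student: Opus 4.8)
The plan is to exhibit, for each candidate curvature bound $\kappa>4$, an obstruction coming from a small piece of the orbit space $X$ near a well-chosen point. Since $\kappa_X$ is the infimum of sectional curvatures on the regular part $X^{reg}$ (by the discussion in Subsection~\ref{firstsubsec}), it suffices to produce a single regular point $p\in X$ and a $2$-plane in $T_pX$ whose sectional curvature is at most $4$; equivalently, by a limiting/blow-up argument, it suffices to find some point of $X$ (possibly singular) near which the space of directions, or an iterated space of directions, contains a totally geodesic flat piece or a round sphere of radius $1/2$. The geometry that forces the constant $4$ should be the Hopf-type fibration $S^{2m-1}\to\C P^{m-1}$ with its Fubini--Study metric of curvature between $1$ and $4$: the round metric of curvature $4$ appears precisely as $\C P^1$.

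The key steps I would carry out, in order, are: (1) Reduce to understanding the local structure of $X$ near a point $\bar x$ corresponding to an orbit $Gx$; by the slice theorem, a neighborhood of $\bar x$ in $X$ is the quotient of a Euclidean slice representation $V$ of the isotropy group $G_x$, so the problem becomes: a quotient $S(V)/H$ of a unit sphere in an orthogonal representation of a compact Lie group $H$, of dimension $\geq 2$, has $\kappa\leq 4$. This is really a statement about representation quotients, and it is enough to treat this case since $S^n/G$ itself is of this form (taking $H=G$, $V=\R^{n+1}$). (2) Analyze the space of directions $\Sigma_{\bar x}X = S(V)/H$ at a point where the $H$-action has a nontrivial principal isotropy or a prescribed orbit type, iterating the slice construction (tangent cones of tangent cones) until one reaches a quotient that is either low-dimensional, a spherical join, or an irreducible representation quotient. (3) In the irreducible case, use the classification-free fact that a cohomogeneity-$\geq 3$ irreducible representation quotient already contains, in its iterated spaces of directions, a $2$-dimensional representation quotient $S(W)/K$, and every $2$-dimensional such quotient is a spherical lune or a sphere; one then checks directly that the only $2$-dimensional quotients with $\kappa>1$ are the ones realized by the $\C,\Q,\Ca$ Hopf constructions, giving $\kappa\in\{1,4\}$ with $4$ attained by $S^3/S^1=S^2(1/2)$. (4) Handle the join/product situations: if $X=X_1*X_2$ as Alexandrov spaces then $\kappa_X=1$ unless one factor is a point, and a product appearing in a tangent cone forces a flat $2$-plane, hence $\kappa\leq 0<4$; so these cases are harmless.

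The main obstacle will be step (3): controlling all $2$-dimensional (and the borderline $3$-dimensional reducing to $2$-dimensional) orbit spaces of sphere actions without invoking a full case-by-case classification of cohomogeneity-two representations. Concretely, one must show that if $S(W)/K$ is $2$-dimensional with curvature $\geq\kappa$, then $\kappa\leq 4$, and that the extremal value $4$ is not exceeded even when $S(W)/K$ is a singular $2$-sphere (a "football" with two conical points) rather than a smooth one; the curvature along the regular part of such a football is constant, determined by the cone angles, and one needs the cone-angle constraint coming from the fact that the ambient space is a \emph{sphere} quotient, which caps the angle and hence the curvature at $4$. I expect this to follow from looking at the links of the two singular points, which are themselves $S^1/(\text{finite})$ of length $\leq\pi$, combined with the Gauss--Bonnet relation $\kappa\cdot\mathrm{Area}+(\text{defects})=2\pi$ for the $2$-sphere; the defects are bounded below by the amount forced by the Hopf picture, yielding $\kappa\le 4$.
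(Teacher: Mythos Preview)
Your proposal contains a fundamental gap in step~(2): the curvature lower bound does \emph{not} descend under the operation of taking spaces of directions (or tangent cones). If $X$ has curvature $\geq\kappa$ in the Alexandrov sense, then $\Sigma_{\bar x}X$ has curvature $\geq 1$, full stop; the number $\kappa$ is lost. Concretely, at any regular point of $X$ the space of directions is the round unit sphere $S^{m-1}$ of curvature $1$, regardless of how large $\kappa_X$ may be. So ``iterating tangent cones'' until you reach a $2$-dimensional sphere quotient tells you nothing about $\kappa_X$, and the reduction you propose simply does not carry the hypothesis $\kappa>4$ along.

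There is also a secondary problem in step~(3). For a general $2$-dimensional quotient $S(W)/K$ (e.g.\ a weighted Hopf quotient $S^3/S^1$ with unequal weights) the curvature on the regular part is \emph{not} constant, so it is not ``determined by the cone angles''; and Gauss--Bonnet controls $\int K\,dA$, not $\inf K$, so the inequality you sketch does not yield an upper bound for $\kappa$.

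The paper's argument uses a different reduction that \emph{does} preserve the lower curvature bound: a singular stratum $Y\subset X$ is locally convex, hence inherits curvature $\geq\kappa_X$, and $Y$ is itself realized (up to a locally isometric map on a dense set) as another sphere quotient $S(W)/H$ (Proposition~\ref{substrata}). One then takes a putative counterexample with $m=\dim X$ minimal; minimality forces $X$ to have no singular strata of dimension $\geq 2$, which via Lemma~\ref{strexist} bounds the rank of $G$ by $3$. The rest is an index-theoretic inequality (Proposition~\ref{mainlem}) relating $\dim G$, $\mathrm{rk}\,G$, and $m$, followed by a case analysis using the classification of low-cohomogeneity irreducible representations. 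There is no purely synthetic shortcut of the kind you outline; the constant $4$ emerges only after the representation-theoretic elimination.
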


This result is sharp  as the Hopf action of the circle on $S^3$ shows.
Moreover,  there are families of actions for which $X$
an orbifold of constant curvature~$4$, see \cite{GL3} for their
classification.
 On the other hand, one can see that  for most
actions the infimum of the sectional curvatures  $\kappa _X$
of the orbit space $X$ is equal to  $1$. It seems to be possible,
but much more technical, to determine all actions for which
the corresponding number $\kappa_X$ is larger than $1$.
We hope to address this classification problem in a forthcoming work.
To put Theorem~\ref{main}  in context,
recall that the \emph{supremum} of sectional curvatures in the 
regular part of~$X$
can be infinite, and it is finite
precisely in case $X$ is a Riemannian orbifold~\cite{LT}; such actions
are classified in \cite{GL3}. 
We mention that the size and general structure of
orbit spaces of unit spheres  as in Theorem \ref{main}
may  have applications to isometric actions on general Riemannian
manifolds, cf.
\cite{GroveSearle,grovewilking} and the references therein.

The proof of Theorem~\ref{main}
uses few simple ideas. Strata in the orbit space are locally convex,
thus inheriting the lower curvature bound from their ambient space.
On the other hand, any stratum is contained in the principal stratum of the
orbit space of another  isometric action on a unit sphere,
as has been observed in~\cite[\S5.1]{GL3}.  This allows for an inductive approach
to the problem and reduces the question to the case where no singular strata
of dimension larger than  $2$ are present. But the absence of such strata
implies that the rank of the original group is at most $3$.
The remaining cases are excluded by an index comparison argument and, in final
instance, by the classification of irreducible representations of compact
simple Lie groups.

It is an interesting question if our theorem has a geometric explanation
not relying on the classification of representations, and if it can be
extended to the case of singular Riemannian foliations on the unit sphere.
Recently, a large family of singular Riemannian foliations
has been constructed in \cite{Radeschi2},
most of which are inhomogeneous (see also~\cite{GorRad}).
All quotient spaces $X$  arising from these foliations are
Alexandrov spaces with curvature $\geq 4$,
but always have some tangent planes with  curvature equal to $4$.

The authors would like to thank Marco Radeschi for interesting discussions
and useful comments. 

\section{Preliminaries}
\subsection{Orbit spaces and strata} \label{firstsubsec}

We recall some basic results about orbit spaces and refer, for instance,
to~\cite[\S2]{GL3} for more details. An isometric action of a
compact Lie group $G$ on a unit sphere is the restriction
of an orthogonal representation $\rho$ of $G$ on an
Euclidean space $V$.
If $S(V)$ denotes the unit sphere of $V$ and $X=S(V)/G$, then
the cohomogeneity $c(\rho)$ of $\rho$ satisfies $c(\rho)=\dim (X)+1$.
Let $\kappa_X =\kappa _{\rho}$ be defined as in the introduction.

The orbit space~$X$ is an Alexandrov space stratified by smooth
Riemannian manifolds, namely, the sets of orbits with the same isotropy
groups up to conjugation. Any such  stratum $Y$ is a locally convex subset
of $X$, so that the infimum of the sectional curvatures along $Y$ is also
bounded below by $\kappa_X$.  There is exactly one maximal stratum,
the set of principal orbits  $X_{\mathrm{reg}}$ of $X$,
corresponding to the unique minimal conjugacy class of isotropy groups.
The corresponding isotropy groups are called the principal isotropy groups.
The set of regular points $X_{\text{reg}}$  is open, dense and convex in $X$.
The restriction of the projection to
the regular set $S^n_{\text{reg}}\to X_{\text{reg}}$ is 
a Riemannian submersion.
Moreover, the orbit  space $X$ is the completion of the convex
open submanifold $X_{\text{reg}}$. Hence Toponogov's globalization theorem,
for instance the version in~\cite{Petglob}, shows that $\kappa _X$ is equal to
the infimum of sectional curvatures of $X_{\text{reg}}$. 

\subsection{Strata and rank}
Denote the rank of the group $G$ by~$k$.
Then there exists a point $p\in S(V)$ such that the isotropy group $G_p$
has rank at least~$k-1$ cf.~\cite[Lemma~6.1]{wilking-acta}.
We infer:

\begin{lem} \label{minorbit}
Let a group $G$ act by isometries on $S(V)$.
If $G$ has rank $k$, then the minimal dimension $\ell$
of a $G$-orbit is at most $\dim (G)-k+1$.
\end{lem}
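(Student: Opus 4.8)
The plan is to exploit the fact (cited from \cite{wilking-acta}) that some isotropy group $G_p$ has rank at least $k-1$, and then to estimate the dimension of the orbit $G\cdot p$ directly from the dimensions of $G$ and $G_p$.

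First I would fix a point $p\in S(V)$ for which the isotropy group $H:=G_p$ has $\operatorname{rank}(H)\geq k-1$. The orbit $G\cdot p$ is diffeomorphic to the homogeneous space $G/H$, so
\[
\dim(G\cdot p)=\dim(G)-\dim(H).
\]
Hence the minimal orbit dimension $\ell$ satisfies $\ell\leq\dim(G)-\dim(H)$, and it remains to bound $\dim(H)$ from below by $k-1$. This is where the rank estimate enters: a compact Lie group $H$ of rank $r$ always has dimension at least $r$, since its maximal torus already has dimension $r$ (equivalently, $\dim(H)=\operatorname{rank}(H)+(\text{number of roots})\geq\operatorname{rank}(H)$). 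Applying this with $r=\operatorname{rank}(H)\geq k-1$ gives $\dim(H)\geq k-1$, and therefore
\[
\ell\leq\dim(G\cdot p)=\dim(G)-\dim(H)\leq \dim(G)-k+1,
\]
which is exactly the asserted inequality.

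There is no real obstacle here; the only point requiring a little care is making sure the cited statement is applied to an \emph{isotropy} group of the action on $S(V)$ (not, say, to a slice representation), and that one uses the trivial but crucial bound $\dim(H)\geq\operatorname{rank}(H)$ valid for any compact Lie group. Since $G\cdot p$ is one particular orbit, its dimension is an upper bound for the minimum $\ell$, so the estimate follows immediately. \EPf
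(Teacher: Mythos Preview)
Your proof is correct and follows exactly the approach the paper intends: the paper states the existence of $p\in S(V)$ with $\operatorname{rank}(G_p)\geq k-1$ (citing \cite[Lemma~6.1]{wilking-acta}) and then simply writes ``We infer'' before the lemma, leaving the steps $\dim(G\cdot p)=\dim(G)-\dim(G_p)$ and $\dim(G_p)\geq\operatorname{rank}(G_p)\geq k-1$ implicit. You have merely spelled out these details.
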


We will need another simple observation:

\begin{lem} \label{strexist}
Let a group $G$ of rank $k$ act by isometries on $S(V)$.
If the $G$-action has trivial principal isotropy groups, then
$X=S(V)/G$ contains a non-maximal stratum of dimension at least $k-2$.
\end{lem}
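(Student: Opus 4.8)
The plan is to prove the statement by induction on $\dim G$. Write $k=\mathrm{rank}(G)$. The case $k\le 1$ is vacuous, so assume $k\ge 2$. Whenever $\mathrm{rank}(G)=2$ I would argue directly: the action cannot be free, for otherwise a maximal torus $T^2\subseteq G^0$ would act freely on $S(V)$, whereas any unit vector lying in a nonzero weight space of $T^2$ has one-dimensional isotropy. Since the principal isotropy groups are trivial, a non-free orbit is non-principal, so $X$ contains a non-maximal stratum, necessarily of dimension $\ge 0=k-2$. This also serves as the base of the induction.

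For the inductive step let $k=\mathrm{rank}(G)\ge 3$. As recalled in the paragraph preceding Lemma~\ref{minorbit}, there is $p\in S(V)$ with $\mathrm{rank}(G_p)\ge k-1$; set $H:=G_p$, so $\mathrm{rank}(H)\ge k-1\ge 2$ and, since $k\ge 3$, $\dim H<\dim G$ (otherwise $G^0\subseteq H$ would fix $p$ and hence lie in the trivial principal isotropy group). Let $\hat\nu$ be the slice representation of $H$ at $p$, i.e.\ the orthogonal complement of $T_p(G\cdot p)$ in $T_pS(V)$, with its induced isometric $H$-action on $S(\hat\nu)$. The crucial observation is that this action again has trivial principal isotropy groups: regular points of $S(V)$ accumulate at $p$, and under the slice identification of a tube around $G\cdot p$ with $G\times_H B(\hat\nu)$ they correspond to vectors of $\hat\nu$ with trivial $H$-isotropy; as the principal isotropy group of $H$ on $\hat\nu$ lies in every $H$-isotropy group, it is trivial. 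Moreover $H$ has a free orbit in $\hat\nu$, so $\dim\hat\nu>\dim H\ge 2$ and $S(\hat\nu)$ is a genuine sphere; thus $(H,S(\hat\nu))$ satisfies the hypotheses of the Lemma with $H$, of smaller dimension, in place of $G$.

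Now I would apply the induction hypothesis to obtain a non-maximal stratum $Z$ of $S(\hat\nu)/H$ with $\dim Z\ge\mathrm{rank}(H)-2\ge k-3$. Each non-principal stratum of the orbit space $\hat\nu/H$ of the linear $H$-action is, away from the vertex, the metric cone over the corresponding stratum of $S(\hat\nu)/H$; hence the cone over $Z$ lies in a non-maximal stratum of $\hat\nu/H$ of dimension $\ge k-2$ (it is non-maximal because the principal stratum of $H$ on $\hat\nu$ is the one with trivial isotropy). Finally, by the slice theorem a neighbourhood of $[p]$ in $X$ is isometric, as a stratified space, to a neighbourhood of the vertex of the metric cone $\hat\nu/H$: a point $q$ near $p$ has $G_q$ conjugate to the $H$-isotropy of its slice vector, so the strata correspond, and those with nontrivial isotropy --- the regular part of $X$ having trivial isotropy --- are exactly the non-maximal ones. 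Transporting the stratum just produced gives a non-maximal stratum of $X$ of dimension $\ge k-2$, completing the induction.

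The heart of the argument is the reduction to the slice representation; the main things to be careful about are that the slice action inherits trivial principal isotropy, that the dimension increases by one in passing from $S(\hat\nu)/H$ to $\hat\nu/H$, and that the local identification of $X$ near $[p]$ with $\hat\nu/H$ respects the partition into maximal versus non-maximal strata. It is exactly the estimate $\mathrm{rank}(H)\ge k-1$ that offsets this one-dimensional cone shift and makes the induction on $\dim G$ close.
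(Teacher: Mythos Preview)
Your approach is essentially the paper's own proof: find a point $p$ with $\mathrm{rank}(G_p)\ge k-1$, pass to the slice representation (which again has trivial principal isotropy), apply the inductive hypothesis to $S(\hat\nu)/G_p$ to obtain a non-maximal stratum of dimension $\ge k-3$, and then gain one dimension by coning to $\hat\nu/G_p$, which is a neighbourhood of $[p]$ in $X$. Your write-up is in fact more careful than the paper's about why the slice inherits trivial principal isotropy and why the stratification matches under the slice identification.

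There is, however, a gap in your induction. You induct on $\dim G$ and claim $\dim H<\dim G$ ``otherwise $G^0\subseteq H$ would fix $p$ and hence lie in the trivial principal isotropy group''. This inference is incorrect: $G^0$ fixing the single point $p$ only says $G^0\subseteq G_p$, not that $G^0$ lies in the \emph{principal} isotropy group (which is the isotropy of a generic point). Concretely, take $G=T^3$ acting on $V=\C^3\oplus\R$ with the standard diagonal action on $\C^3$ and trivial action on $\R$: the principal isotropy is trivial, yet $p=(0,0,0,1)\in S(V)$ is a fixed point of all of $G$, so $H=G_p=G$ and $\dim H=\dim G$. Your induction does not move in this case.

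The fix is simple: induct on $\dim V$ (equivalently on $n=\dim S(V)$) rather than on $\dim G$. Since $\hat\nu$ is the normal space to $G\cdot p$ inside $S(V)$, one always has $\dim S(\hat\nu)=n-1-\dim(G\cdot p)\le n-1<n$, so the inductive parameter strictly decreases regardless of whether $G_p$ is a proper subgroup. With this change your argument goes through verbatim and matches the paper's. (The paper phrases the induction as ``on $k$'', which faces the same issue when $\mathrm{rank}(G_p)=k$; the clean induction variable in both cases is $\dim V$.)
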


\begin{proof}
We proceed by induction on $k$.
In the initial case $k=2$, there exists a point with non-trivial
isotropy group, so the quotient $X$ contains non-regular points, and
therefore at least one non-maximal stratum (of dimension at least $0$).
In general, we first find a point $p\in S(V)$ such that the
isotropy group $G_p$ has rank at least $k-1$.
The slice representation of $G_p$ on the normal space $\mathcal H_p$
has again trivial principal isotropy groups, so the inductive
assumption yields that the quotient of the unit sphere $S(\mathcal H_p)$
by~$G_p$ has a non-regular stratum of dimension at least  $k-3$.
The corresponding stratum in $\mathcal H_p/G_p$, and thus also
in a neighborhood of $x=G\cdot p$ in $X$, has dimension at least $k-3+1=k-2$,
which completes the induction step.
\end{proof}

\subsection{Enlarging group actions and polar representations}
Let $\tau:H\to\OG V$ be a representation of a compact Lie group. 
Consider a closed subgroup~$G$  of~$H$ and the 
representation
$\rho:G\to \OG V $ obtained by restriction.
The canonical projection $S(V)/G \to S(V)/H$ restricts to 
a Riemannian submersion on an open and dense set, 
hence this map does not decrease the sectional curvatures
by the formula of O'Neill~\cite[Corollary~1.5.1]{GW}. This shows:

\begin{prop}\label{enlargement}
Suppose an orthogonal representation $\rho:G\to\OG{V}$
is the restriction of another representation $\tau:H\to\OG{V}$,
where $G$ is a closed subgroup of $H$.
If $c(\tau )\geq 3$  then $\kappa_\rho \leq \kappa_\tau$.
\end{prop}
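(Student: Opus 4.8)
The idea is to read the inequality off O'Neill's curvature formula, exactly as anticipated in the paragraph preceding the statement; the only genuine task is the bookkeeping that reduces everything to a Riemannian submersion between smooth Riemannian manifolds. First I would pin down the two numbers involved. By Subsection~\ref{firstsubsec}, as soon as an orbit space has dimension $\geq2$ its invariant $\kappa$ equals the infimum of the Riemannian sectional curvatures over its open, dense, convex regular part. Now $c(\tau)\geq3$ means $\dim\bigl(S(V)/H\bigr)\geq2$, and since the canonical projection $q\colon S(V)/G\to S(V)/H$ is surjective we also have $\dim\bigl(S(V)/G\bigr)\geq2$; hence $\kappa_\rho$ and $\kappa_\tau$ are the infima of $\sec$ over $\bigl(S(V)/G\bigr)_{\mathrm{reg}}$ and $\bigl(S(V)/H\bigr)_{\mathrm{reg}}$, respectively.

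Next I would make the submersion explicit. Let $A\subseteq S(V)$ be the set of points that are simultaneously $G$-regular and $H$-regular; it is open and dense. The orbit maps $\pi_G$ and $\pi_H$ restrict to Riemannian submersions of $A$ onto open dense subsets $U_G\subseteq\bigl(S(V)/G\bigr)_{\mathrm{reg}}$ and $U_H\subseteq\bigl(S(V)/H\bigr)_{\mathrm{reg}}$; moreover $q(U_G)=U_H$ because $q\circ\pi_G=\pi_H$, and, since every $H$-orbit is a union of $G$-orbits, the induced map $q\colon U_G\to U_H$ is again a Riemannian submersion. This is precisely the assertion recalled just before the proposition (see~\cite[\S2]{GL3}).

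The conclusion is then immediate from O'Neill~\cite[Corollary~1.5.1]{GW}. Fix $b\in U_H$, a preimage $\tilde b\in q^{-1}(b)\cap U_G$, and a $2$-plane $\sigma\subseteq T_bU_H$; because $\dim U_H\geq2$ it possesses a horizontal lift $\tilde\sigma\subseteq T_{\tilde b}U_G$. O'Neill's formula expresses the sectional curvature of $\sigma$ in $S(V)/H$ as the sectional curvature of $\tilde\sigma$ in $S(V)/G$ plus a nonnegative term, so the former is at least the latter, which in turn is at least $\kappa_\rho$ since $\tilde b$ lies in the regular part of $S(V)/G$. Taking the infimum over all such $b$ and $\sigma$, and using that $U_H$ is dense in $\bigl(S(V)/H\bigr)_{\mathrm{reg}}$ together with the continuity of sectional curvature, we obtain $\kappa_\tau\geq\kappa_\rho$.

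The step carrying all the weight is the second one: one must know that the quotient Riemannian metrics on $U_G$ and $U_H$ are compatible, i.e.\ that the submersions $A\to U_G$, $A\to U_H$ and $q\colon U_G\to U_H$ form a commuting triangle of Riemannian submersions. That is exactly the input quoted before the statement; granting it, the curvature comparison is a one-line application of O'Neill. Note also that the hypothesis $c(\tau)\geq3$ enters only to guarantee $\dim\bigl(S(V)/H\bigr)\geq2$, so that ``$\kappa_\tau$'' constrains genuine sectional curvatures and the lift $\tilde\sigma$ exists in the first place.
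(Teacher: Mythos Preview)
Your argument is correct and follows exactly the approach sketched in the paragraph preceding the proposition: the canonical projection $S(V)/G\to S(V)/H$ is a Riemannian submersion on an open dense set, and O'Neill's formula then gives the curvature inequality. You have simply spelled out the bookkeeping (identifying the common regular locus $A$, the density argument, and the role of $c(\tau)\geq3$) that the paper leaves implicit.
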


Recall that polar representations $\rho:G\to \OG V$ are
exactly those whose induced action on $S(V)$ has orbit space
of constant curvature $1$~\cite[Introd.]{GL2}. In particular,
all polar representations $\tau$ with $c(\tau ) \geq 3$ satisfy $\kappa _{\tau} =1$.  We now have:

\begin{cor} \label{tensor}
Let $\rho_i:G_i\to\OG{V_i}$ for $i=1$, $2$
be $\mathbb F$-linear representations, where $\mathbb F=\R$, $\C$, $\Q$.
If $\dim_{\mathbb F}(V_i)\geq3$ for $i=1$, $2$,
then the  tensor product representation  $\rho$ of $G=G_1\times G_2 $ on
$V=V_1\otimes_{\mathbb F}V_2$ satisfies $\kappa _{\rho} =1$.
\end{cor}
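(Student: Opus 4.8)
The plan is to deduce Corollary~\ref{tensor} from Proposition~\ref{enlargement} by exhibiting the tensor product $\rho$ as the restriction of a \emph{polar} representation $\tau$ with cohomogeneity at least $3$; once this is done, the final sentence preceding the corollary gives $\kappa_\tau=1$, Proposition~\ref{enlargement} gives $\kappa_\rho\le\kappa_\tau=1$, and the general lower bound $\kappa_X\ge1$ for any quotient of a unit sphere (recalled in the introduction) forces $\kappa_\rho=1$.

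\medskip

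First I would recall the standard fact that over $\mathbb F=\R,\C,\Q$ the full group acting on a tensor product is polar: the representation $\tau$ of $H=\U[p]_{\mathbb F}\times \U[q]_{\mathbb F}$ (the appropriate $\mathbb F$-unitary groups, i.e.\ $\OG{p}$, $\U[p]$, or $\SP[p]$) on $V=\mathbb F^p\otimes_{\mathbb F}\mathbb F^q$ by $g_1\otimes g_2$ is polar, with a flat section given by the ``diagonal'' matrices; this is classical (it is the isotropy representation of a symmetric space, e.g.\ a Grassmannian or its noncompact dual). Setting $p=\dim_{\mathbb F}V_1$ and $q=\dim_{\mathbb F}V_2$, the hypothesis $p,q\ge3$ guarantees that the cohomogeneity of $\tau$ — which equals $\min(p,q)$ up to the usual additive constant, in any case $\ge3$ — satisfies $c(\tau)\ge3$. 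Then $\rho$ is obtained from $\tau$ by restricting along the closed subgroup homomorphism $G_1\times G_2\to H$ given by $\rho_1\times\rho_2$ composed with the inclusion of the images into the $\mathbb F$-unitary groups; note $\rho_i$ maps $G_i$ into $\OG{V_i}$, but since $\rho_i$ is $\mathbb F$-linear its image actually lies in the $\mathbb F$-unitary group of $V_i$, which is exactly what is needed for the tensor product over $\mathbb F$ to make sense and to be the restriction of $\tau$.

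\medskip

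The one genuinely delicate point — and the place I expect to spend the most care — is bookkeeping around the real case and the precise value of $c(\tau)$. Over $\R$ one must check that the cohomogeneity of $\SO{p}\times\SO{q}$ (or $\OG{p}\times\OG{q}$) acting on $\R^p\otimes\R^q$ is $\min(p,q)$, so that $p,q\ge3$ indeed yields $c(\tau)\ge3$ and Proposition~\ref{enlargement} applies; the edge cases $\min(p,q)=3$ are exactly where the hypothesis is sharp, and one should confirm polarity does not degenerate there. A secondary subtlety is that Proposition~\ref{enlargement} requires $G$ to be a \emph{closed} subgroup of $H$: the image of $G_1\times G_2$ under $\rho_1\times\rho_2$ is a closed subgroup of the compact group $H$ since $G_1\times G_2$ is compact, so this is automatic, but it is worth stating. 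Finally, I would remark that the conclusion $\kappa_\rho=1$ (rather than merely $\le1$) uses only that $S(V)/G$ is always an Alexandrov space of curvature $\ge1$, which was recorded at the very start of the introduction; no further input is needed.
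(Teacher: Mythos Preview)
Your proposal is correct and follows essentially the same route as the paper: enlarge $\rho$ to the polar representation $\tau$ of the full $\mathbb F$-unitary groups $\OG{V_1}\times\OG{V_2}$, $\U{V_1}\times\U{V_2}$, or $\SP{V_1}\times\SP{V_2}$ on $V_1\otimes_{\mathbb F}V_2$, observe $c(\tau)=\min(\dim_{\mathbb F}V_i)\ge3$, and apply Proposition~\ref{enlargement}. Your added remarks on closedness of the subgroup and on the lower bound $\kappa_\rho\ge1$ needed for the equality are correct and make explicit what the paper leaves implicit.
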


\begin{proof}
It follows directly from Proposition~\ref{enlargement} by
 taking $H=\OG{V_1}\otimes\OG{V_2}$,
$H=\U{V_1}\otimes\U{V_2}$, $H=\SP{V_1}\otimes\SP{V_2}$ if
$\mathbb F=\R$, $\C$, $\Q$, respectively.   Indeed, the representation
$\tau$ of $H$ on $V$ is polar and enlarges $\rho$.
Moreover,  the cohomogeneity $c(\tau)$ is the minimum of the
two numbers $\dim_{\mathbb F}(V_i) \geq 3$.
\end{proof}

\subsection{Reductions and strata as subquotients}
Let a compact Lie group $G$ act by isometries on
an unit sphere $S(V)$. Fix an arbitrary point  $p\in S(V)$, denote
its isotropy group by $G_p$, consider its orbit $x=G\cdot p\in X=S(V)/G$,
and denote by $Y$ the stratum of $X$ which contains $x$.
The set of all points in $V$ fixed by $G_p$ is a subspace $W$
on which the normalizer $N$ of $G_p$ in $G$ acts isometrically.
There is a canonical map $S(W)/N\to X=S(V)/G$,
which is  length-preserving and  a local isometry on an open dense subset
of $S(W)/N$~\cite[Lemma~5.1]{GL3}. Moreover, $Y$ is dense in the image of
this map. We deduce that the infimum
of the curvatures on $S(W)/N$ is also bounded from below by $\kappa_X$.
Finally, note that $G_p$ acts trivially on $W$ and that the induced action of
$H=N/G_p$ on $S(W)$ has trivial principal isotropy groups. This shows:

\begin{prop}\label{substrata}
Let $\rho:G\to \OG V$ be a representation with $c(\rho )\geq 3$.
Let $X=S(V)/G$ and assume that $X$ has a singular stratum $Y$ of
dimension $d\geq 2$. Then there exists a representation $\tau:H\to\OG W$
with trivial principal isotropy groups such that $S(W)/H$ has dimension $d$
and $\kappa _{\rho} \leq \kappa _{\tau}$.
\end{prop}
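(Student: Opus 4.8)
The plan is to assemble Proposition~\ref{substrata} directly from the facts already collected in this subsection, reading the hypothesis through the dictionary $c(\rho)=\dim(X)+1$. First I would fix a point $p\in S(V)$ whose orbit $x=G\cdot p$ lies in the given singular stratum $Y$, and carry over all the notation of the preceding paragraph: $G_p$ the isotropy group, $W$ the fixed-point subspace $V^{G_p}$, $N$ the normalizer of $G_p$ in $G$, and $H=N/G_p$ acting on $W$. The cited \cite[Lemma~5.1]{GL3} supplies the canonical length-preserving map $S(W)/N\to S(V)/G$ which is a local isometry on an open dense set and whose image has $Y$ as a dense subset; since $G_p$ acts trivially on $W$, this map factors through $S(W)/H$, and the induced $H$-action on $S(W)$ has trivial principal isotropy by the standard slice-type argument (the principal isotropy of $N$ acting on $W=V^{G_p}$ is exactly $G_p$). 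So I set $\tau:H\to\OG W$ to be this action, which is the candidate in the statement.

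Next I would verify the three assertions about $\tau$. For the curvature inequality $\kappa_\rho\le\kappa_\tau$: since the comparison map is a length-preserving local isometry on an open dense subset of $S(W)/H$, any lower curvature bound $\kappa$ valid on $X=S(V)/G$ pulls back to that open dense subset of $S(W)/H$; by the globalization discussion in Subsection~\ref{firstsubsec} (the regular set is dense and convex, and Toponogov globalization applies), this forces $S(W)/H$ to have curvature $\ge\kappa$ globally, whence $\kappa_\rho=\kappa_X\le\kappa_\tau$. For the dimension claim, $\dim\bigl(S(W)/H\bigr)=\dim Y=d$ because $Y$ is dense in the image of a map that is a local isometry on a dense open set of $S(W)/H$; a dense subset of a manifold stratum and an open dense piece of $S(W)/H$ share the same dimension. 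Finally, trivial principal isotropy of $\tau$ was already arranged in the construction. One should also note $c(\tau)=\dim(S(W)/H)+1=d+1\ge 3$, so $\tau$ is a genuine cohomogeneity-$\ge 3$ representation and the earlier propositions remain applicable to it downstream — though this is not literally part of the statement, it is worth recording.

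The only point that needs a little care — and the step I expect to be the main (minor) obstacle — is the passage from "local isometry on an open dense set, with $Y$ dense in the image" to the clean global statements about dimension and curvature of $S(W)/H$. Concretely one must check that the open dense set of $S(W)/H$ on which the map is a local isometry meets the preimage of the regular stratum of the target in a set carrying the full dimension $d$, so that no dimension is lost and the curvature bound is genuinely transported onto an open piece of the \emph{regular} part of $S(W)/H$; after that, Toponogov globalization closes the gap. All of this is already essentially in place in the text preceding the proposition (the paragraph constructing $W$, $N$, $H$ and invoking \cite[Lemma~5.1]{GL3}), so in the write-up I would simply organize those observations and append the dimension and cohomogeneity bookkeeping.
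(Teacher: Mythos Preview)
Your proposal is correct and follows essentially the same approach as the paper: the proposition is stated immediately after the paragraph constructing $W$, $N$, and $H=N/G_p$ and invoking \cite[Lemma~5.1]{GL3}, with the words ``This shows:'' serving as the entire proof. You are simply making explicit the bookkeeping (dimension, curvature transfer via the length-preserving local isometry, trivial principal isotropy) that the paper leaves implicit in that paragraph.
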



\subsection{An index  estimate}
Let $G$ act on $S(V)$ as above and
consider the restriction $\pi$ of the projection $S(V)\to X$
to the regular part. Applying O'Neill's formula
for the curvature of the
Riemannian submersion $\pi$, we see that planes with curvature~$1$ exist in
$X=S(V)/G$ if and only if O'Neill's tensor of $\pi$ vanishes at some pair of
linearly independent, horizontal vectors. Therefore,
$\kappa _{\rho} >1$ directly implies that the dimension of a regular orbit
$G\cdot p$ is at least $\dim (X)-1$. On the other hand,
using index estimates one can say slightly more.

Indeed let $F=G\cdot p$ be a principal orbit and let
$\gamma  :[0,\pi] \to S^n$ be a unit speed $F$-geodesic, namely,
a geodesic starting perpendicularly to $F$. The
index of $\gamma$ as $F$-geodesic
(which equals the sum of focal multiplicities of $F$ along $\gamma$)
is equal to~$\dim (F)$.
On the other hand,  the index of $\gamma$ is also obtained as the sum of
the vertical index $\mathrm{ind}_{\mathrm{vert}} (\gamma)$ and the
horizontal index $\mathrm{ind}_{\mathrm{hor}} (\gamma)$,
see~\cite[Lemma 5.1]{LT} and \cite[\S3]{LytJacobi}.
The vertical index  counts the intersections of $\gamma$ with singular orbits:  $$\mathrm{ind}_{\mathrm{vert}} (\gamma) = \sum_{t\in (0,\pi)} (\dim (F) - \dim (G\cdot \gamma (t) )).$$
Moreover, the horizontal index is the index of the transversal Jacobi equation defined by Wilking in~\cite{Wilk}. This Jacobi equation has the
form $J''+R_t(J) =0$ for a time-dependent symmetric operator
$R_t :U\to U$ on a Euclidean vector space $U$ of dimension
$\dim (X)-1$. Around a regular point $\gamma(t)$,
the  Jacobi equation  $J''+R_t(J) =0$ is just the Jacobi equation in the
Riemannian manifold $X_{\mathrm{reg}}$ along the projected geodesic
$\pi\circ\gamma$. Therefore, if all sectional curvatures at regular points of
$X$ along $\gamma$ are at least $\kappa$, then $R_t \geq \kappa \cdot Id$.
Thus the standard index comparison~\cite[Lemma~2.6.1]{Kl} implies that
 $\mathrm{ind}_{\mathrm{hor}} (\gamma)\geq \dim (X)- 1$ in
case $\kappa _{\rho} >1$, and
$\mathrm{ind}_{\mathrm{hor}} (\gamma )\geq 2(\dim (X)- 1)$
in case $\kappa _{\rho} >4$.

Now we can easily deduce:

\begin{prop} \label{mainlem}
Let $\rho:G\to \OG V$ be as above.
Denote by~$\ell$ the smallest dimension of a $G$-orbit in $S(V)$, and
by~$m\geq 2$ be the dimension of the orbit space $X=S(V)/G$. Then:
\begin{enumerate}
\item if $\kappa >1$,  then $\ell\geq m-1$;
\item if $\kappa >4$, then $\ell\geq 2(m-1)$.
\end{enumerate}
%
%
\end{prop}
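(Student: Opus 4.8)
The plan is to combine the index estimate developed in the preceding paragraph with a count involving the vertical index and the minimal orbit dimension. Let $F=G\cdot p$ be a principal orbit, of dimension $\dim(F)$, and let $\gamma:[0,\pi]\to S(V)$ be a unit speed $F$-geodesic. The total index of $\gamma$ as an $F$-geodesic equals $\dim(F)$, and it splits as $\mathrm{ind}_{\mathrm{vert}}(\gamma)+\mathrm{ind}_{\mathrm{hor}}(\gamma)$. The key point is that one should choose $\gamma$ so that it passes through a point lying on a minimal orbit: since such an orbit has dimension $\ell$, the vertical index picks up a contribution of at least $\dim(F)-\ell$ from that single intersection. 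Hence $\mathrm{ind}_{\mathrm{vert}}(\gamma)\geq \dim(F)-\ell$, and therefore $\mathrm{ind}_{\mathrm{hor}}(\gamma)=\dim(F)-\mathrm{ind}_{\mathrm{vert}}(\gamma)\leq \ell$.

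On the other hand, the index comparison recalled just above the proposition gives a \emph{lower} bound on $\mathrm{ind}_{\mathrm{hor}}(\gamma)$: in the regular part of $X$ along $\gamma$ all sectional curvatures are $\geq\kappa$, so the transversal Jacobi operator $R_t$ on the $(m-1)$-dimensional space $U$ satisfies $R_t\geq\kappa\cdot\mathrm{Id}$. Comparing with the constant-curvature model via~\cite[Lemma~2.6.1]{Kl}, one gets $\mathrm{ind}_{\mathrm{hor}}(\gamma)\geq m-1$ when $\kappa>1$ and $\mathrm{ind}_{\mathrm{hor}}(\gamma)\geq 2(m-1)$ when $\kappa>4$ (the point being that over an interval of length $\pi$ the comparison Jacobi equation on $U$ has $m-1$ conjugate points when the curvature exceeds $1$, and $2(m-1)$ when it exceeds $4$). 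Chaining the two inequalities yields $\ell\geq \mathrm{ind}_{\mathrm{hor}}(\gamma)\geq m-1$ in case (1) and $\ell\geq 2(m-1)$ in case (2).

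The one point that needs care is the selection of the geodesic $\gamma$: I need an $F$-geodesic, i.e.\ a geodesic meeting $F$ orthogonally, that actually hits a minimal orbit at some interior time $t_0\in(0,\pi)$. This is where I expect the only real friction. The cleanest way is to start instead from a point $q$ on a minimal orbit $G\cdot q$ of dimension $\ell$, take a horizontal geodesic $\gamma$ emanating from $q$ that is generic enough to become principal immediately, and observe that every orbit-geodesic on the round sphere $S(V)$ closes up (the cut distance along horizontal directions is $\pi$), so $\gamma(\pi)$ again lies on an orbit and $\gamma$ meets the principal orbit through $\gamma(\varepsilon)$ orthogonally; reparametrizing, $\gamma$ is an $F$-geodesic for $F=G\cdot\gamma(\varepsilon)$ that passes through the minimal orbit at an interior parameter. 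Then the vertical index formula $\mathrm{ind}_{\mathrm{vert}}(\gamma)=\sum_{t\in(0,\pi)}(\dim(F)-\dim(G\cdot\gamma(t)))$ has the term at that interior time equal to $\dim(F)-\ell\geq 0$, giving the bound above. With $\gamma$ in hand, the rest is the bookkeeping already described, and the proposition follows.
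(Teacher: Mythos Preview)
Your argument is correct and follows essentially the same route as the paper: choose an $F$-geodesic through a minimal orbit, bound the vertical index below by $\dim(F)-\ell$, and combine with the horizontal index estimate from the preceding paragraph to get $\ell\geq\mathrm{ind}_{\mathrm{hor}}(\gamma)$. The paper is simply terser about the construction of~$\gamma$ (it just asserts the existence of a horizontal geodesic from a regular orbit through~$L$), whereas you spell out the reparametrization starting from a point of~$L$; both are fine, and in fact on the round sphere $\gamma(\pi)=-\gamma(0)$ lies on a principal orbit whenever $\gamma(0)$ does, so the intersection with~$L$ is automatically interior.
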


\Pf
Let $L$ be an orbit of smallest dimension $\ell$.  Take a regular orbit $F$ and
a horizontal unit speed
geodesic $\gamma:[0,\pi] \to S(V)$ starting in $F$ and going
through~$L$. The index of $\gamma$ is $\dim (F)$. On the other hand,
the vertical index is at least $\dim (F) -\ell$.  Since the index of $\gamma$
is the sum of the vertical and the horizontal indices,
we see that $\ell$ cannot be smaller than the horizontal index of $\gamma$.
Thus the result follows from the index estimates above.
 \EPf

\section{Main result}
\subsection{Formulation}
In this section, we prove Theorem~\ref{main}. Suppose to the contrary that
there exists a representation $\rho:G\to\OG V$  a compact Lie group $G$
such that $X=S(V)/G$ has dimension~$m\geq 2$
and satisfies $\kappa _\rho >4$.  We may assume that $m$ is minimal among all
such examples. Namely, for any representation $\tau :H\to\OG W$ of a
compact Lie group $H$ such that the dimension~$m'$ of the
orbit space $Y=S(W)/H$ satisfies
$m>m'\geq 2$, we have $\kappa _{\tau} \leq 4$.
We assume further that, for this~$m$,
$g:=\dim (G)$ is minimal among all such examples, and
we fix the representation $\rho$ throughout the proof.

\subsection{Principal isotropy and identity component}
By the assumption on the minimality of $g$, the representation of
$\rho$ is \emph{reduced} in the sense of \cite[\S1.2]{GL}:  for any other
representation $\tau:H\to\OG W$ such that $S(W)/H$ is isometric to $X$,
we have $\dim (H) \geq g$. In particular, this implies that the action of
$G$ on $S(V)$ has trivial principal isotropy groups.

Let $\rho _0$ be the restriction of $\rho$ to the identity
component $G^0$ of $G$.
Then the projection $X_0=S(V) /G^0 \to X=S(V)/G$ is a Riemannian covering
over the set of regular points of $X$. We deduce
$\kappa _{\rho} =\kappa _{\rho_0}$.  Hence, we may replace $G$ by $G^0$
and assume from now on that $G$ is connected.

\subsection{Strata and rank}
The minimality assumption on $m$ together with Proposition~\ref{substrata} show
that $X$ does not contain singular strata of dimensions larger than $1$.
From   Lemma~\ref{strexist}
we deduce that the rank~$k$ of~$G$ is at most~$3$.

\subsection{Irreducibility}
Our assumption yields $\mathrm{diam}(X)\leq\pi/\sqrt{\kappa_X}<\pi/2$.
This implies that the representation $\rho:G\to\OG V$ is 
irreducible~\cite[\S5]{GL}.


\subsection{Basic identity and inequality}
Now~$\rho$ is  an irreducible representation of a connected group $G$ of
dimension~$g$ and rank~$1\leq k\leq 3$. The representation is also
reduced, so the principal isotropy groups are
trivial and the dimension $n$ of $S(V)$ satisfies
$$g+m=n.$$
From Lemma~\ref{minorbit} and Proposition~\ref{mainlem} we deduce
$$g-k\geq 2m-3.$$
In particular, we have $n\leq\frac 3 2 g+ 1$.

\subsection{The case $m=2$}
Due to \cite{str}, in this case the only reduced
representations $\rho:G\to \OG V$ of a connected non-trivial group $G$
are representations of the circle group $\U 1$ of $\R^4$.
But such representations are reducible.

\subsection{The case $m=3$}
Due to the classification of irreducible representations of
cohomogeneity $4$ of connected compact Lie groups~\cite[Theorem~1.8]{GL},
in this case the only reduced representations
are given by the actions of $\SO 3$ on $\R ^7$ and by $\U 2$ on $\R ^8$.
Both cases contradict the inequality $g-k \geq 2m-3=3$.

\subsection{The case $m=4$}
Due to the classification of irreducible representations of
cohomogeneity $5$ of connected compact Lie groups~\cite[Theorem~1.8]{GL},
in this case the only reduced representations are given by the
action of $\SU 2$ on $\R ^8$ and by the action of
$\SO3\times \U 2$ on $\R ^{12} =\R^3\otimes _{\R} \R^4$.
Both cases contradict the inequality
$g-k \geq 2m-3=5$.

\subsection{Consequences}
Henceforth we may  assume that $m\geq 5$.
Therefore $g\geq 2m-3+k\geq7+k$. This immediately excludes
 the possibility that $G$ have rank $1$ or that $G$ be covered by a product
of (two or three) groups of rank $1$.
Moreover, $G$ cannot be covered by $\SU 3$ or $\U 1 \times \SU 3$.

\subsection{Type of representation}
We claim that the  centralizer $N$ of $\rho(G)$ in $\OG V$ has $\rho(G)$ as its
identity component. Otherwise, we find a subgroup $H$ of $\OG V$
containing $\rho(G)$ with one dimension more. The inclusion $\tau:H\to \OG V$
is an enlargement of $\rho$. The quotient space $Y=S(V) /H$ has dimension at
least $m-1\geq4$, and $\kappa _{\rho} \leq \kappa _{\tau}$
owing to Proposition~\ref{enlargement}.
By our minimality assumption on $m$, we must have $\dim(Y)=\dim(X)$.
But then $\tau $ and $\rho$ have the same orbits. This implies
that $\tau$ has non-trivial principal isotropy groups, so it cannot be
reduced.
It follows that neither $\rho$ is reduced, in contradiction to our assumption.

We deduce that $\rho$ cannot be of quaternionic type,
and if $\rho$ is of complex type then $G$ is covered by
$\U 1\times G'$ for some connected compact Lie group $G'$.

\subsection{Representations of complex type}
Assume that $\rho$ is of complex type, namely,
$V$ admits an invariant complex structure;
in particular, $\dim(V)$ is even.
It follows from the preceding two subsections that
$G$ must be covered by  $\U 1\times G'$, where $G'$ a connected compact
simple Lie group of rank $2$ and $G'\neq \SU 3$. Hence $G'=\SP2 $ or $\G$.

If $G'=\SP2$, we have $g=11$ and $11\geq2m$, which gives~$m=5$.
Then the dimension $n+1$ of $V$ is $17$, which is odd and thus impossible.

If $G'=\G$, we obtain $g=15$, hence $15\geq2m$.
Since the dimension of $V$ is even, we deduce that $m$ must be even as well,
hence $m=6$. Now $V$ has dimension
$g+m+1=22$.  But there are no irreducible representations of $\G$
on $\C^{11}$ (cf.\ appendix in~\cite{K}).

There remains only the case $\rho$ is of real type.

\subsection{Non-simple groups}
If  $G$ is not simple then, up to a finite covering, it must have the
form  $G=G_1\times G_2$ where $\mathrm{rk}(G_1)=2$ and $\mathrm{rk}(G_2)=1$
and where $G_1$ is simple. Now $G_1$ is one of~$\SU 3$, $\SP2 $ or $\G$.
Since $\rho$ is irreducible and there is no $G$-invariant complex structure on $V$, $G_2$ cannot be $\U 1$, so it is covered by $\SU 2$.
The representation $\rho$ is a tensor product $\rho =\rho_1 \otimes _{\mathbb F} \rho _2$ where the $\rho_i :G_i\to\OG{V_i}$ are irreducible
$\mathbb F$-linear representations and $\mathbb F =\R$, $\C$ or $\Q$.
Since the representation $\rho$ is of real type, we cannot
have~$\mathbb F= \C$. From Corollary~\ref{tensor},
we see that the $\mathbb F$-dimensions of $V_i$ cannot be both larger
than~$3$. We deduce that $\mathbb F$ cannot be $\R$ either,
thus $\mathbb F=\Q$.

Note that $\SU 3 $ and $\G$  do not have  irreducible
representations of quaternionic type. We are left with the case $G_1=\SP 2$,
$G_2=\SU 2$.  Then $g=13$ and $13\geq2m$.  Since the  dimension  $g+m+1$ of $V$ must be divisible by $4$, we get $m=6$ and $\dim(V)=20$.
Then $20=4rs$, where $r$ and $s$ are the dimensions over $\mathbb H$ of $V_1$ and $V_2$, respectively. We deduce $r=5$ and $s=1$.
However, there does not exist irreducible  quaternionic representation of
$\SP 2$ on $\mathbb H$ or $\mathbb H^5$ (cf.~again the appendix in~\cite{K}).

\subsection{Kollross' table and the case of simple groups}
We are left with the case $G$ is a simple group of rank $2\leq k\leq 3$.
The dimension $n+1$ of $V$, thus the degree of the corresponding
complexified representation satisfies
$n+1\leq \frac 3 2 g +2  < 2g+2$. Thus we may apply
Lemma~2.6 from~\cite{K} to deduce that $\rho$ is one of the representations
listed in the tableau therein. Only four
of those representations are representations of groups of rank
$2\leq k\leq 3$,
namely two representations of $\G$ and two representations of $\Spin 7$.
Two of  these representations satisfy $g\geq \dim (V)$ (they 
also have cohomogeneity one),
which is impossible under our assumptions.
For the two remaining representations
the condition $n\leq \frac 3 2 g +1$ is violated.
This finishes the proof of Theorem~\ref{main}.

\providecommand{\bysame}{\leavevmode\hbox to3em{\hrulefill}\thinspace}
\providecommand{\MR}{\relax\ifhmode\unskip\space\fi MR }
\providecommand{\MRhref}[2]{%
  \href{http://www.ams.org/mathscinet-getitem?mr=#1}{#2}
}
\providecommand{\href}[2]{#2}

\end{document}